\numberwithin{equation}{section}       
\numberwithin{figure}{section}       
\theoremstyle{plain}
\newtheorem{Thm}{Theorem}[section]
\newtheorem{Prop}[Thm]{Proposition}
\newtheorem{Lemma}[Thm]{Lemma}
\newtheorem{Prop-def}[Thm]{Proposition-Definition}
\newtheorem*{MainThm}{Main Theorem}
\theoremstyle{definition}
\newtheorem{Conj}[Thm]{Conjecture}
\newtheorem{Example}[Thm]{Example}
\newtheorem{Def}[Thm]{Definition}
\newcommand{\A}{{\mathbf{A}}}
\newcommand{\C}{{\mathbf{C}}}
\newcommand{\N}{{\mathbf{N}}}
\renewcommand{\P}{{\mathbf{P}}}
\newcommand{\Q}{{\mathbf{Q}}}
\newcommand{\R}{{\mathbf{R}}}
\newcommand{\Z}{{\mathbf{Z}}}
\newcommand{\Qbar}{{\overline{\Q}}}
\newcommand{\cL}{{\mathcal{L}}}
\newcommand{\cO}{{\mathcal{O}}}
\newcommand{\fo}{{\mathfrak{o}}}
\newcommand{\hh}{{\hat{h}}}
\newcommand{\htau}{{\hat{\tau}}}
\newcommand{\tf}{{\tilde{f}}}
\newcommand{\tG}{{\tilde{G}}}
\newcommand{\tV}{{\tilde{V}}}
\renewcommand{\a}{\alpha}
\renewcommand{\d}{\delta}
\newcommand{\la}{\lambda}
\newcommand{\e}{\varepsilon}
\newcommand{\p}{\psi}
\newcommand{\ie}{i.e.\ }
\newcommand{\ord}{\operatorname{ord}}
\begin{document}

\title[Canonical heights]{Canonical heights for plane polynomial maps of small topological degree}
\date{\today}
\author{Mattias Jonsson}
\address{Dept of Mathematics, University of Michigan, Ann Arbor \\ MI 48109-1043\\ USA}
\email{mattiasj@umich.edu}

\author{Elizabeth Wulcan}
\address{Dept of Mathematics, Chalmers University of Technology and the University of Gothenburg\\ SE-412 96 G{\"o}teborg, Sweden}
\email{wulcan@chalmers.se}

\subjclass{Primary: 37P30; Secondary: 11G50, 37P15}

\keywords{Canonical height, dynamical degrees, polynomial mappings, compactifications, arithmetic dynamics}

\begin{abstract}
  We study canonical heights for 
  plane polynomial mappings of small topological degree.
  In particular, we prove that for points of canonical height zero,
  the arithmetic degree is bounded by the topological degree and hence
  strictly smaller than the first dynamical degree.
  The proof uses the existence, proved by Favre and the first author, 
  of certain compactifications of the plane adapted to the dynamics.
\end{abstract}

\maketitle
%
%
%
%
%
%
\section{Introduction}
J.~Silverman~\cite{Silverman11} recently proposed a number of conjectures on 
the growth of heights and degrees under iterates
of rational selfmaps of projective space, and proved them for monomial maps.
Here we study the growth of heights for a large class of plane polynomial maps.

Consider a polynomial mapping $f:\A^2\to\A^2$ defined
over the field $\Qbar$ of algebraic numbers.
The \emph{first dynamical degree} $\la_1$ 
is defined by
\begin{equation*}
  \la_1:=\lim_{n\to\infty}(\deg f^n)^{1/n}.
\end{equation*}
The \emph{second dynamical degree} $\la_2$ is the number of preimages 
under $f$ of a general closed point in $\A^2$.
It follows from B\'ezout's Theorem that $\la_2\le\la_1^2$.
Following Guedj~\cite{guedjpoly} we say that $f$ 
has \emph{small topological degree} if 
$\la_2<\la_1$. 

Let $h$ be the standard logarithmic height on $\P^2(\Qbar)\supseteq\A^2(\Qbar)$. 
\begin{MainThm}
  Let $f:\A^2\to\A^2$ be a polynomial mapping of
  small topological degree, $\la_2<\la_1$. 
  Then the limit 
  \begin{equation*}
    \hh:=\lim_{n\to\infty}\la_1^{-n}h\circ f^n
  \end{equation*}
  exists, and is finite, pointwise on $\A^2(\Qbar)$. 
  We have $\hh\not\equiv0$ and $\hh\circ f=\la_1\hh$.
  Further, if $P\in\A^2(\Qbar)$ and $\hh(P)=0$, then
  \begin{equation}\label{e201}
    \limsup_{n\to\infty}h(f^n(P))^{1/n}\le\la_2<\la_1.\tag{*}
  \end{equation}
  If $f$ is moreover a polynomial automorphism, then 
  $\hh(P)=0$ iff $P$ is periodic.
\end{MainThm}
As in~\cite{Silverman11} we call $\hh$ the \emph{canonical height} associated to $f$,
whereas the left hand side of~\eqref{e201} is the \emph{arithmetic degree} of the point $P$.
Our Main Theorem says that for points of canonical height zero, the 
height along the orbit grows relatively slowly.

A related Conjecture~3 in~\cite{Silverman11} states that $\hh(P)>0$ whenever 
$P$ has Zariski dense orbit in $\A^2$.
By the Main Theorem, this holds for polynomial automorphisms, but we have not been 
able to establish it for noninvertible polynomial mappings of small topological 
degree. For such maps, the locus $\hh=0$ may contain points 
for which $h\circ f^n$ grows exponentially, see~\S\ref{S111}. 

The basic arithmetic dynamics of 
plane polynomial automorphisms
is well understood, thanks to work of 
Silverman~\cite{Silverman94}, 
Denis~\cite{Denis95},
Marcello~\cite{Marcello00,Marcello03},
Kawa\-guchi~\cite{Kawaguchi06,Kawaguchi09},
Lee~\cite{Lee09},
Ingram~\cite{Ingram11} 
and others. 
In particular, the Main Theorem above and Conjecture~3 in~\cite{Silverman11}
were already known\footnote{The existence of $\hh$ as a limit rather than a
  $\limsup$ seems to be new, however.}
for polynomial automorphisms $f$ with 
$\la_1(f)>1$.
The existing proofs make use of the inverse map $f^{-1}$ and also 
rely crucially on the Friedland-Milnor classification~\cite{FriedlandMilnor}, which 
shows that, up to conjugation, $f$
is a composition of generalized H\'enon maps
and in particular \emph{regular} in the sense of~\cite{Sibony}. 
In fact, the two results above are true for regular polynomial automorphisms 
of any dimension, see~\cite[Theorem~36]{Silverman11}.

For noninvertible maps, no algebraic classification is known. 
Instead we exploit a result by Favre and the first
author~\cite{dyncomp} which shows that maps of small topological degree 
always admit compactifications that are well adapted to the dynamics. 
We refer to~\S\ref{S104} for a precise statement.
Given such a compactification, we work locally with a given absolute value and estimate
the growth of the local height under iteration. When the absolute value is Archimedean,
this was essentially carried out in~\cite[\S7]{dyncomp}, adapting techniques from the early work on the complex H\'enon map, see~\cite{Hubbard86,HO,FriedlandMilnor,BS1,FSHenon}.

The existence of a compactification adapted to the dynamics is proved in~\cite{dyncomp}
using the induced dynamics on a suitable space of valuations, 
see also~\cite{valtree,eigenval,dynberko}.
In order to address Silverman's Conjecture~3 in our setting, one would likely
have to refine this valuative analysis, something which is 
beyond the scope of the present paper.

The ergodic theory of polynomial maps of small topological degree over
the complex numbers is studied in detail in~\cite{DDG1,DDG2,DDG3}. 
It would be interesting to see if these results have non-Archimedean 
or arithmetic analogues.

The paper is organized as follows. In~\S\ref{S101} we recall some facts concerning 
dynamical degrees and heights. We also state the key result from~\cite{dyncomp}
on the existence of a compactification adapted to the dynamics. 
The Main Theorem is proved in~\S\ref{S106}.

%
%
%
%
%
%
\section{Background}\label{S101}
Unless indicated otherwise, we work over the field $\Qbar$
of algebraic numbers.
%
%
%
%
\subsection{Admissible compactifications}\label{S102}
We use the standard embedding $\A^2\hookrightarrow\P^2$
given in coordinates by $(x_1,x_2)\mapsto[1:x_1:x_2]$.
Let $\cL$ be the set of affine functions on~$\A^2$.
\begin{Def}\cite{dyncomp}
  An \emph{admissible compactification} of $\A^2$ is a smooth projective 
  surface $X$ together with a birational morphism 
  $\pi:X\to\P^2$ that is an isomorphism above $\A^2$.
\end{Def}
We can thus view $\A^2$ as a Zariski open subset of $X$.
By the structure theorem for birational morphisms of surfaces, $\pi$
is a finite composition of point blowups, 
so the divisor $X\setminus\A^2$ has normal crossing singularities.

Let $\xi\in X\setminus\A^2$ be a closed point.
The closure in $X$ of at least one of the curves 
$\{x_i=0\}\subseteq\A^2$, $i=1,2$, 
does not contain $\xi$: this is true already when $X=\P^2$.
Pick local coordinates $z_1,z_2\in\cO_{X,\xi}$ such that 
$X\setminus\A^2\subseteq E_1\cup E_2$, locally at $\xi$,
where $E_i=\{z_i=0\}$. 
Write $b_i=-\max_{\ell\in\cL}\ord_{E_i}(\ell)\in\Z_{\ge0}$,
where $\ord_{E_i}(\ell)$ is the order of vanishing of $\ell$ along $E_i$.
Note that $b_i=-\ord_{E_i}(\ell)$ for a general affine function
$\ell\in\cL$. Thus $b_i=0$ iff $E_i\cap\A^2\ne\emptyset$.
We can write
\begin{equation}\label{e101}
  x_i=z_1^{-b_1}z_2^{-b_2}\p_i,
\end{equation}
where $\p_i\in\cO_{X,\xi}$ and where $\p_i(\xi)\ne0$ for at least one $i$.

Similarly, if $f:\A^2\to\A^2$ is any dominant polynomial mapping, then 
\begin{equation}\label{e107}
  f^*x_i=z_1^{\ord_{E_1}(f^*\ell)}z_2^{\ord_{E_2}(f^*\ell)}\chi_i,
\end{equation}
for a general affine function $\ell\in\cL$, where $\chi_i\in\cO_{X,\xi}$.
In this case, we do not claim that $\chi_i(\xi)\ne0$ for some $i$.
%
%
%
%
\subsection{Degree growth}\label{S103}
See~\cite{guedjpoly,eigenval,dyncomp} for more details on what follows.
Let $f:\A^2\to\A^2$ be a dominant polynomial mapping defined over $\Qbar$.
The \emph{degree} $\deg f$ of $f$ is the degree of $f^*\ell$ for a general
affine function $\ell\in\cL$.
Thus $\deg f=\max_i\deg f^*x_i$.

It is easy to see that 
$\deg f^{n+m}\le(\deg f^n)(\deg f^m)$, hence the limit
$\la_1:=\la_1(f):=\lim_{n\to\infty}(\deg f^n)^{1/n}$ exists. It is called the 
\emph{first dynamical degree} of $f$.
The \emph{second dynamical degree} $\la_2:=\la_2(f)$ is 
the topological degree of $f$, \ie 
the number of preimages of a general closed point in $\A^2$.
Note that $\la_i(f^n)=\la_i(f)^n$. 
It follows from B\'ezout's Theorem that $\la_2\le\la_1^2$.
While the degree $\deg f$ depends on the choice of embedding
$\A^2\hookrightarrow\P^2$, the dynamical degrees $\la_i(f)$ do not.

The degree growth sequence $(\deg f^n)_{n=0}^\infty$ of plane
polynomial maps was studied in detail in~\cite{eigenval,dyncomp};
see also~\cite{deggrowth}. 
In particular, we have
\begin{Thm}\cite[Theorem~A'.]{eigenval}\label{T102}
  If $\la_1>1$, then $\deg f^n\sim n^l\la_1^n$ as $n\to\infty$, 
  where $l\in\{0,1\}$.
  Further, $l=0$ unless $\la_2=\la_1^2$ and $f$ is conjugate to a skew
  product.
\end{Thm}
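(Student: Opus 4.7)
The plan is to reduce degree growth to linear algebra on the N\'eron--Severi group of a suitable compactification of $\A^2$. The first step---and the main technical obstacle---is to construct an admissible compactification $\pi\colon X\to\P^2$ on which the meromorphic extension of $f$ is \emph{algebraically stable}, meaning that $(f^n)^*=(f^*)^n$ on $\NS(X)_\R$ for every $n\ge 1$. Without such a model the equality $(f^n)^*=(f^*)^n$ fails because of indeterminacy points of the iterates, so this step is essential. Existence of a stable $X$ is established by analysing the induced dynamics of $f$ on the valuative tree at infinity: the contracting behaviour of the induced map on this tree produces an invariant subspace spanned by exceptional prime divisors that can be contracted to yield the desired model.

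Once $X$ is fixed, set $L=\pi^*\cO_{\P^2}(1)$. Then $\deg f^n=L\cdot(f^n)^*L=L\cdot(f^*)^n L$ by algebraic stability, so the degree sequence is governed by the single linear endomorphism $f^*$ on the finite-dimensional space $\NS(X)_\R$. Since $f^*$ preserves the strictly convex nef cone, a Perron--Frobenius argument yields a nef eigenclass $\theta^*$ whose eigenvalue is the spectral radius of $f^*$; this spectral radius agrees with $\la_1$. Standard linear algebra then gives $\deg f^n\sim c\,n^l\la_1^n$ as $n\to\infty$, where $l+1$ is the size of the largest Jordan block of $f^*$ at the eigenvalue $\la_1$. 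The task is thus reduced to bounding and characterising $l$.

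The inequality $l\le 1$ comes from the Hodge index theorem, which endows $\NS(X)_\R$ with signature $(1,\rho-1)$. The Perron class $\theta^*$ satisfies $(\theta^*)^2\ge 0$; a Jordan block of size three or more at $\la_1$ would then force a generalised eigenvector of negative self-intersection to pair strictly positively with $\theta^*$, contradicting Hodge index. Hence $l\in\{0,1\}$.

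For the final assertion, suppose $l=1$. The identity $f_*f^*=\la_2\cdot\id$ on $\NS(X)_\R$, applied to the two-dimensional $f^*$-invariant generalised eigenspace at $\la_1$, forces $(\theta^*)^2=0$ and $\la_2=\la_1^2$. The isotropic nef eigenclass $\theta^*$ is then semiample on a further blow-up, inducing an $f$-equivariant fibration $X\to C$ onto a curve $C$; restricted to $\A^2$ this exhibits $f$ as a skew product, completing the proof. The conceptual heart of the argument is the existence of the algebraically stable model; everything downstream is standard surface geometry combined with linear algebra on $\NS(X)_\R$.
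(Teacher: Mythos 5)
Your route—pass to an algebraically stable model and do linear algebra on $\NS(X)_\R$—is genuinely different from the Favre--Jonsson proof in~\cite{eigenval}, which works directly with the induced dynamics on the valuative tree at infinity, locates the eigenvaluation $\nu_*$ with $f_\bullet\nu_*=\nu_*$, and reads off the degree asymptotics from the local type (divisorial, quasimonomial, infinitely singular, or curve) of $\nu_*$. Your approach is closer in spirit to~\cite{deggrowth}, and the downstream steps (Perron--Frobenius in the nef cone, the Hodge index argument giving a Jordan block of size at most two, the push--pull formula $f_*f^*=\la_2\,\id$ forcing $(\theta^*)^2=0$ and $\la_2=\la_1^2$ when there is a size-two block) are all correct in outline and constitute a clean, attractive reduction.

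The serious gap is precisely where you say the ``main technical obstacle'' lies. Existence of an algebraically stable admissible compactification of $\A^2$ for a polynomial $f$ is itself one of the main theorems of~\cite{eigenval}, and its proof \emph{is} the eigenvaluation analysis: one blows up along the finite approximating sequence of divisorial valuations converging to $\nu_*$ until indeterminacy points stop falling on contracted curves. Your gloss --- that the valuative dynamics ``produces an invariant subspace spanned by exceptional prime divisors that can be \emph{contracted}'' --- is not the right mechanism: for admissible compactifications one can only blow up over the line at infinity, never contract below $\P^2$, and for general non-invertible rational surface maps stable models need not exist at all (Favre), so the polynomial/at-infinity structure is essential and must be used. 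As written, you have assumed the hard part of~\cite{eigenval} and then rederived the rest.

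Two further points in the $l=1$ branch need an argument you have not supplied. First, to pass from the isotropic nef class $\theta^*$ to an $f$-equivariant fibration you need a rational (indeed integral) representative, hence $\la_1\in\Z$; nothing in the sketch rules out $\la_1=\sqrt{\la_2}$ irrational before the fibration is produced. (In~\cite{eigenval} integrality falls out because the $l=1$ case corresponds to $\nu_*$ being a curve valuation, which directly yields the invariant rational pencil.) Second, the asymptotic $\deg f^n\sim c\,n^l\la_1^n$ with a genuine limit requires that $\la_1$ be the unique eigenvalue on the circle $|z|=\la_1$ with maximal Jordan block \emph{and} that the ample class $L$ pair nontrivially against the corresponding eigenprojection; the cone-Perron--Frobenius theorem gives only that no other such eigenvalue has a strictly larger block, so a little more is needed to exclude oscillation.
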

%
%
%
%
\subsection{Dynamical compactifications}\label{S104}
The following result from~\cite{dyncomp} plays a key role in 
the proof of the Main Theorem.
\begin{Thm}\label{T101}
  Let $f:\A^2\to\A^2$ be a polynomial map of 
  small topological degree: $\la_2<\la_1$.
  Then, for every $\e>0$ there exist an integer $n_0\ge1$,
  an admissible compactification $X$ of $\A^2$ and a 
  decomposition $X\setminus\A^2=Z^+\cup Z^-$ into 
  (possibly reducible) curves $Z^+$, $Z^-$ without common
  components, such that the following properties hold.
  \begin{itemize}
  \item[(1)]
    If $F$ is any irreducible component of $Z^-$ and $\ell\in\cL$ is
    a general affine function, then
    \begin{equation}\label{e109}
      \ord_F(f^{n_0*}\ell)\ge(\la_2+\e)^{n_0}\ord_F(\ell).
    \end{equation}
  \item[(2)]
    The extension $f^{n_0}:X\dashrightarrow X$ of $f^{n_0}$ as a rational map 
    is regular at any point on $Z^+$.
  \item[(3)]
    There exists a closed point $\xi_+\in Z^+\setminus Z^-$
    such that $f^{n_0}(Z^+)=\{\xi_+\}$ and $f(\xi_+)=\xi_+$. 
    Further, we are in one of the following two cases:
    \begin{itemize}
    \item[(a)]
      $\la_1\not\in\Q$, there are two irreducible components 
      $E_1$, $E_2$ of $Z^+$ containing $\xi_+$ and, locally at $\xi_+$, we have 
      $f^*E_i=a_{i1}E_1+a_{i2}E_2$, where $a_{ij}\in\N$, 
      and the $2\times2$ matrix $(a_{ij})$ has spectral radius $\la_1$;
    \item[(b)]
      $\la_1\in\N$, there is a unique irreducible component $E$ of $Z^+$
      containing $\xi_+$, $f(E)=\{\xi_+\}$ and, locally at $\xi_+$, we have 
      $f^*E=\la_1E$.
    \end{itemize}
  \end{itemize}
\end{Thm}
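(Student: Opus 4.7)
The plan is to exploit the valuative framework developed for the study of degree growth on $\A^2$ in~\cite{valtree,eigenval,dyncomp}. I introduce the \emph{valuative tree at infinity} $\cV$, consisting of valuations $\nu$ of $\Qbar(x_1,x_2)$ centered at infinity, normalized so that $\max_{\ell\in\cL}(-\nu(\ell))=1$. Each irreducible component of $X\setminus\A^2$, for any admissible compactification $X$, defines a divisorial point of $\cV$, and conversely any divisorial $\nu$ can be realized in this way after sufficiently many blowups. The map $f$ induces a selfmap $f_\bullet$ of $\cV$ by pullback and renormalization by the factor $d(f,\nu)\=\max_{\ell\in\cL}(-\nu(f^*\ell))$, and the first dynamical degree $\la_1$ is the spectral radius of this action while the ``transverse'' rate is controlled by $\la_2$.

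The first substantive step is to produce an \emph{eigenvaluation} $\nu_\star\in\cV$ with $f_\bullet\nu_\star=\nu_\star$ and $d(f,\nu_\star)=\la_1$. I would obtain this via a tree-theoretic fixed point argument applied to the continuous selfmap $f_\bullet$, together with a careful analysis of the function $\nu\mapsto d(f,\nu)$. The assumption $\la_2<\la_1$ provides a spectral gap that makes $\nu_\star$ \emph{attracting} in $\cV$ with rate $\la_1$, and moreover bounds the growth of $d(f^n,\cdot)$ on any open subset disjoint from $\nu_\star$ by a factor of roughly $\la_2$ per iterate. Analysis of the local dynamics of $f_\bullet$ at $\nu_\star$ then yields the dichotomy in~(3): $\nu_\star$ is either divisorial, which forces $\la_1\in\N$ and produces case~(b), or quasi-monomial of irrational type, which forces $\la_1\not\in\Q$ and, after picking the two divisorial valuations adjacent to $\nu_\star$ along the $f_\bullet$-invariant tree segment through $\nu_\star$, produces case~(a) with the $2\times 2$ matrix of spectral radius $\la_1$. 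Ruling out $\la_1\in\Q\setminus\N$ is built into this analysis.

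Given $\nu_\star$, I construct $X$ by blowing up $\P^2$ enough to realize a neighborhood of $\nu_\star$ in $\cV$ as exceptional components; put the one or two components through the common point $\xi_+$ associated with $\nu_\star$ into $Z^+$, and all remaining components of $X\setminus\A^2$ into $Z^-$. Conclusion~(3) holds by construction once $f$ is replaced by $f^{n_0}$ for $n_0$ sufficiently large so that $f^{n_0}(Z^+)=\{\xi_+\}$ and $f(\xi_+)=\xi_+$. For~(1), each irreducible component $F\subseteq Z^-$ corresponds to a divisorial valuation $\ord_F$ in the repelling region of $\nu_\star$, so the attracting-rate estimate, after further enlarging $n_0$ if needed, translates into $d(f^{n_0},\ord_F)\ge(\la_2+\e)^{n_0}$; unnormalizing gives~\eqref{e109}. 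Finally, for~(2) I resolve any remaining indeterminacies of $f^{n_0}$ on $Z^+$ by additional blowups sitting above $\xi_+$, which do not disturb the bound on $Z^-$ since $Z^-$ is disjoint from $\xi_+$.

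The principal obstacle is the valuative fixed-point analysis rather than the geometric translation. The tree $\cV$ has uncountable branching at generic points, so existence of $\nu_\star$ requires a genuine tree-theoretic fixed point theorem together with uniform semicontinuity of $d(f,\cdot)$ under iteration. Extracting the dichotomy between cases~(a) and~(b) from the local shape of $f_\bullet$ at $\nu_\star$, and in particular excluding the intermediate case $\la_1\in\Q\setminus\N$, is equally delicate. By contrast, once the correct $\nu_\star$ has been identified, the construction of $X$ and verification of~(1)--(3) amount to a resolution-of-singularities argument with careful bookkeeping of the $Z^+/Z^-$ decomposition.
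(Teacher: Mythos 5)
Your outline follows the same valuative route as the source the paper relies on: the paper does not reprove this theorem, but cites it directly as \cite[Lemma~7.3]{dyncomp}, adding only that $\la_1\notin\Q$ in case~(a) follows from \cite[Prop.~2.5 and the proof of Thm.~3.1]{dyncomp}, and that the (purely algebraic) analysis there carries over from $\C$ to an arbitrary algebraically closed field of characteristic zero. Your reconstruction---eigenvaluation $\nu_\star$ for the normalized action $f_\bullet$ on the valuative tree at infinity, the divisorial versus irrational quasi-monomial dichotomy, an admissible compactification $X$ realizing a neighborhood of $\nu_\star$, attraction of rate $\la_1$ at $\xi_+$ and a repulsion rate at most $\la_2+\e$ along $Z^-$---is the correct skeleton of the argument in \cite{dyncomp}, so at the level of a roadmap this is essentially the same approach. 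Be aware, though, that your step~(3) dichotomy silently assumes the eigenvaluation is quasi-monomial: the analysis in \cite{eigenval,dyncomp} also has to exclude curve and infinitely singular eigenvaluations (this is where $\la_2<\la_1$ is used in an essential way), and the conclusion $\la_1\in\N$ in case~(b), together with the exclusion of $\la_1\in\Q\setminus\N$ overall, each require dedicated arguments. As you yourself note, that hard content is exactly what your sketch defers to the reference.
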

The theorem above corresponds 
to~\cite[Lemma~7.3]{dyncomp}.\footnote{The inequality~\eqref{e109} 
  is reversed in~\cite{dyncomp} but this is a typo. The right hand side of~\eqref{e109} is negative.}
The fact that we may assume $\la_1\not\in\Q$ in
case~(a) follows from Proposition~2.5 and the proof of Theorem~3.1 in~\cite{dyncomp}.

Note that everything in~\cite{dyncomp} is stated over the complex numbers, 
and the assertions in~(a) and~(b) are slightly more precise than what is
written here: they give normal forms in (possibly transcendental) local coordinates,
based on the work of Favre~\cite{FavreRigid}.
However, the main analysis in~\cite{dyncomp} is purely algebraic. When working 
over a general algebraically closed field of characteristic zero one obtains
Theorem~\ref{T101}. 
%
%
%
%
\subsection{Absolute values}\label{S113}
Consider an admissible compactification $X$ of $\A^2$.
Let $K$ be a number field such that $X$ and the map 
$\pi:X\to\P^2$ are defined over $K$. 

Let $M_K$ be the set of (normalized) absolute values on $K$.
For each $v\in M_K$, let $K_v$ be the completion.
Write $X(K)$ for the set of $K$-rational points of $X$.
Similarly, $X(K_v)$ is the set of $K_v$-rational points of 
$X\otimes_KK_v$. 
Define $\A^2(K)$ and $\A^2(K_v)$ in the same way
and equip all these spaces with the topology induced by $v$.
Then $X(K_v)$ is compact and contains $X(K)$ as a dense subset. 
Further, $\A^2(K)$ is dense in $X(K)$, hence also in $X(K_v)$.
This follows since every point
in $X$ admits a Zariski open neighborhood $U$ isomorphic to affine 2-space
such that $U\setminus\A^2$ is a proper Zariski closed subset.
%
%
%
%
\subsection{Heights}\label{S105}
See~\cite[\S3.1]{SilvermanBook} or~\cite[{\S}3]{LangDioph} 
for an introduction to heights.
Let $K$ be a number field. 
For any absolute value $v\in M_K$ define a \emph{local height} 
$\tau_v:\A^2(K)\to\R_+$ by
\begin{equation*}
  \tau_v
  =\log^+\max\{|x_1|_v,|x_2|_v\}
  =\log\max\{1,|x_1|_v,|x_2|_v\}
\end{equation*}
and define the (absolute, global) \emph{logarithmic height} 
$h=h_K:\A^2(K)\to\R_+$ by
\begin{equation*}
  h=\sum_{v\in M_K}\frac{[K_v:\Q_v]}{[K:\Q]}\tau_v.
\end{equation*}
If $L/K$ is a finite extension, then 
$h_L=h_K$ on $\A^2(K)$, see~\cite[Proposition~3.4]{SilvermanBook}.

Let us express the local height $\tau_v$ in local coordinates at
infinity. Consider an admissible compactification $X$ of $\A^2$
and a closed point $\xi\in X\setminus\A^2$. 
Let $L\supseteq K$ be a number field over which $X$ and $\xi$ are
defined and extend $v$ as an absolute value on $L$.
Pick local coordinates $z_i\in\cO_{X,\xi}$, $i=1,2$, defined
over $L$ such that $X\setminus\A^2\subseteq E_1\cup E_2$,
locally at $\xi$, where $E_i=\{z_i=0\}$. 
Given $\d_i>0$ define
\begin{equation}\label{e102}
  \Omega_{v,\d}:=\{P\in\A^2(L_v)\mid |z_i(P)|_v<\d_i, i=1,2\}.
\end{equation}
By~\S\ref{S113} we have $\Omega_{v,\d}\cap\A^2(L)\ne\emptyset$.
It follows from~\eqref{e101} that if $0<\d_i\ll1$, then 
\begin{equation}\label{e103}
  \tau_v=-\sum_{i=1}^2b_i\log|z_i|_v+O(1)
\end{equation}
on $\Omega_{v,\d}$.
Similarly, it follows from~\eqref{e107} that if $f:\A^2\to\A^2$
is a dominant polynomial mapping defined over $L$, then 
\begin{equation}\label{e108}
  \tau_v\circ f\le\sum_{i=1}^2\ord_{E_i}(f^*\ell)\log|z_i|_v+O(1)
\end{equation}
on $\Omega_{v,\d}$, where $\ell\in\cL$ is a general affine function.
%
%
%
%
\subsection{Canonical height and arithmetic degree}\label{S114}
Consider a dominant polynomial mapping 
$f:\A^2\to\A^2$ defined over $\Qbar$.
For simplicity assume $\la_1=\la_1(f)>1$.
Silverman defines the \emph{arithmetic degree} on $\A^2(\Qbar)$ by
\begin{equation*}
  \a:=\a_f:=\limsup_{n\to\infty}(h\circ f^n)^{1/n}
\end{equation*}
and shows that $\a\le\la_1$, see~\cite[Proposition~12]{Silverman11}.

By Theorem~\ref{T102} we have $\deg(f^n)\sim n^l\la_1^n$,
where $l\in\{0,1\}$. Further, $l=0$ when $\la_2<\la_1^2$.
Silverman defines the \emph{canonical height} on $\A^2(\Qbar)$ by
\begin{equation*}
  \hh:=\hh_f:=\limsup\frac1{n^l\la_1^n}h\circ f^n
\end{equation*}
and shows in~\cite[Proposition~19~(d)]{Silverman11} that 
$\a(P)<\la_1$ implies $\hh(P)=0$. 
Inspired by Silverman's work, we state
\begin{Conj}\label{C104}
 We have $\hh(P)<\infty$ for all $P\in\A^2(\Qbar)$.
 Moreover, if $\la_2<\la_1^2$, then $\hh(P)=0$ implies
 $\a(P)<\la_1$.
\end{Conj}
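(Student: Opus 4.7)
The conjecture splits into a finiteness assertion and a partial converse, and I would attack them in that order. For the finiteness, the plan is to mimic the one-dimensional proof (where $h\circ f-(\deg f)h$ is globally bounded and telescoping yields the canonical height), the subtlety being that in dimension two the relevant eigenvalue is $\la_1$ rather than $\deg f$. Concretely, I would look for a pair $(X,\theta)$ consisting of an admissible compactification $X$ and a class $\theta\in N^1(X)_\R$ with $f^*\theta=\la_1\theta$, or at least a nef limit class coming from the induced $f^*$-dynamics on $\varinjlim_X N^1(X)_\R$ as developed in~\cite{eigenval,dyncomp}. Building a local height $h_\theta$ for such $\theta$ and verifying $h_\theta\circ f\le\la_1 h_\theta+O(1)$ would let one telescope to $\la_1^{-n}h_\theta\circ f^n\le h_\theta+O(1)$, and comparing $h_\theta$ with the standard height $h$ would yield $\hh(P)<\infty$ (noting that $\la_2<\la_1^2$ forces $l=0$ in Theorem~\ref{T102}, so $\deg f^n\sim\la_1^n$ and no $n^l$-correction is needed).

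For the implication $\hh(P)=0\Rightarrow\a(P)<\la_1$, I would try to carry over the Main Theorem's strategy verbatim. There, Theorem~\ref{T101} is used to decompose $X\setminus\A^2=Z^+\cup Z^-$ so that orbits of a point $P$ with $\hh(P)=0$ avoid the basin of the attracting fixed point $\xi_+\in Z^+$ (that basin contributes positive $\hh$, via the local eigenvalue structure in part~(3)) and must instead fall into $Z^-$, where~\eqref{e108} combined with~\eqref{e109} bounds the local height growth by $(\la_2+\e)^n$ and therefore $\a(P)\le\la_2+\e<\la_1$. Under the weaker hypothesis $\la_2<\la_1^2$ one instead needs a decomposition with a bound of the form $(\mu+\e)^n$ along $Z^-$ for some $\mu<\la_1$. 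The most optimistic candidate is $\mu=\sqrt{\la_2}$, reflecting the Hodge-index-type expectation that $f^*$ on $N^1(X)_\R$ has second-largest eigenvalue bounded by $\sqrt{\la_2}$, and $\sqrt{\la_2}<\la_1$ precisely when $\la_2<\la_1^2$. Given such a compactification, the place-by-place argument of~\S\ref{S106} would yield $\tau_v\circ f^n\le C_v(\mu+\e)^n$ at every place $v$, hence $\a(P)\le\mu+\e<\la_1$.

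The hard step in both parts is producing the required compactification. For the finiteness, realizing an $f^*$-eigenclass on a single admissible $X$ (rather than only in the limit) and coupling it to an effective local-height construction is nontrivial: the valuative machinery of~\cite{dyncomp,eigenval,dynberko} produces the class but is not directly quantitative at the level of heights. For the converse, extending Theorem~\ref{T101} from the small-topological-degree regime $\la_2<\la_1$ to the full range $\la_2<\la_1^2$ is genuinely open: the attracting fixed valuation at infinity used to build $Z^+$ presently exists because $\la_1$ strictly exceeds $\la_2$, and weakening the hypothesis to the spectral gap $\la_1>\sqrt{\la_2}$ would require new valuative input together with an analogue of the Favre normal-form theorem~\cite{FavreRigid} invoked in case~(3)(a). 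I expect this birational/valuative step to be substantially harder than the subsequent arithmetic estimates, which should largely follow the pattern of~\S\ref{S106}.
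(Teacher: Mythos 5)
This statement is labeled a \emph{Conjecture} in the paper, and the paper does not prove it: the Main Theorem only establishes the special case $\la_2<\la_1$ (small topological degree), and the text explicitly says as much after stating Conjecture~\ref{C104}. So there is no ``paper's own proof'' to compare against, and your proposal should be read as a research plan rather than a proof attempt. You clearly recognize this yourself, and your diagnosis of the missing ingredient is exactly right: the bottleneck is the absence of an analogue of Theorem~\ref{T101} for the full range $\la_1>\sqrt{\la_2}$, since the attracting fixed valuation at infinity that yields $(Z^+,Z^-,\xi_+)$ is, in~\cite{dyncomp}, produced only under $\la_2<\la_1$. Your sketch of the arithmetic step given such a compactification (the place-by-place estimates via~\eqref{e108},~\eqref{e109} yielding $(\mu+\e)^n$ growth along $Z^-$) does track the paper's argument in~\S\ref{S106} accurately, including the observation that $\la_2<\la_1^2$ forces $l=0$ in Theorem~\ref{T102}.

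Two remarks on the details. First, for the finiteness assertion, your proposed route via an $f^*$-eigenclass $\theta\in N^1(X)_\R$ and a telescoping inequality $h_\theta\circ f\le\la_1 h_\theta+O(1)$ is a genuinely different (and more global) strategy than what the paper uses even in the proved case: there, finiteness of $\htau_v$ is extracted pointwise from the local normal forms~\eqref{e104},~\eqref{e105} near $\xi_+$, and the compactification in Theorem~\ref{T101} is \emph{not} one on which $f$ lifts to an algebraically stable map, so an honest nef $\theta$ with $f^*\theta=\la_1\theta$ on a single admissible $X$ need not exist; the eigenclass in~\cite{eigenval,dyncomp} lives only in the inverse limit. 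Making the eigenclass approach effective at the level of local heights would require either algebraic stability or a careful treatment of the $O(1)$ error as one changes models, both nontrivial. Second, the candidate $\mu=\sqrt{\la_2}$ is plausible but unproved; even in the range covered by the Main Theorem the paper only establishes $\a(P)\le\la_2$ for points of vanishing canonical height (see~\eqref{e201}), not $\a(P)\le\sqrt{\la_2}$, so the sharper spectral-gap bound would itself be new. In short: your plan is reasonable and your identification of what is missing agrees with the paper's own assessment, but as you acknowledge, neither half of the conjecture is actually proved.
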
 
The second part of this conjecture would provide
a converse to Proposition~19~(d) in~\cite{Silverman11},
whereas the first part would give an affirmative answer to 
a special case of Question~18 in~\textit{loc.\ cit}.

Our Main Theorem gives a positive answer to Conjecture~\ref{C104} when
$\la_2<\la_1$ and shows that, under this assumption,
the $\limsup$ in the definition of $\hh$  is in fact a limit.
On the other hand, the following example shows that the 
last statement in Conjecture~\ref{C104} may fail when 
$\la_2=\la_1^2$.
\begin{Example}\cite[Example~16]{Silverman11}.\label{E104}
  If $f(x,y)=(x^2,xy^2)$, then $\la_1=2$, $\la_2=4$ and 
  $\deg f^n=(n+2)\cdot2^{n-1}$, so $l=1$.
  For $P=(2,0)$ we have $h(f^n(P))=2^n\log 2$.
  In particular, $\hh(P)=0$ but $\a(P)=\la_1=2$.
\end{Example}
See also~\cite{KS12} for definitions and results of canonical heights and
arithmetic degrees for rational selfmaps of more general algebraic varieties.
%
%
%
%
%
%
\section{Proof of the Main Theorem}\label{S106}
Fix a polynomial mapping $f:\A^2\to\A^2$
defined over $\Qbar$ and of small topological degree: $\la_2<\la_1$. 
Let $K_0=K_0(f)$ be a number field over which $f$ is defined.
%
%
%
%
\subsection{Growth of local heights}\label{S107}
\begin{Prop}\label{P102}
  We may choose $K_0$ above such that if $K\supseteq K_0$ and 
  $v\in M_K$ is any normalized absolute value, then:
  \begin{itemize}
    \item[(i)]
      the limit $\htau_v:=\lim_{n\to\infty}\la_1^{-n}\tau_v\circ f^n$ 
      exists, and is finite, pointwise on $\A^2(K)$; 
    \item[(ii)]
      the set $U^+_v:=\{\htau_v>0\}\subseteq\A^2(K)$ is open and nonempty;
    \item[(iii)]
      $\limsup_{n\to\infty}(\tau_v\circ f^n)^{1/n}\le\la_2$ on $\A^2(K)\setminus U^+_v$.
    \end{itemize}
\end{Prop}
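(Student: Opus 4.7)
The plan is to use Theorem~\ref{T101} to localize the analysis near the super-attracting fixed point $\xi_+\in Z^+\setminus Z^-$ at infinity. Fix $\e>0$; let $n_0,X,Z^\pm,\xi_+$ be as in that theorem, and enlarge $K_0$ so that $f$, $X\to\P^2$, each component of $Z^\pm$, and local coordinates $(z_1,z_2)$ at $\xi_+$ (with $\{z_i=0\}$ the components of $Z^+$ through $\xi_+$) are all defined over $K_0$. Replacing $f$ by $f^{n_0}$ rescales both $\la_i$ by the same exponent and preserves assertions (i)--(iii), so I may assume $n_0=1$.

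The local input from Theorem~\ref{T101} is the formula $f^*z_i=z_1^{a_{i1}}z_2^{a_{i2}}u_i$ with $u_i(\xi_+)\ne 0$ and the nonnegative integer matrix $A=(a_{ij})$ of spectral radius $\la_1$ (case (b) being its scalar version $A=(\la_1)$). Setting $L_i:=-\log|z_i|_v$, I choose a polydisk $\Omega_v$ of the form~\eqref{e102} small enough that $\log|u_i|_v$ is bounded on $\Omega_v$ and $f(\Omega_v)\subseteq\Omega_v$ (possible since $\xi_+$ is a regular fixed point). Iterating the one-step identity $L(f(P))=AL(P)+r(P)$, with $r$ uniformly bounded on $\Omega_v$, gives $L(f^n(P))=A^nL(P)+\sum_{k<n}A^{n-1-k}r(f^k(P))$; a standard spectral argument, using that $\la_1$ is a simple dominant eigenvalue of $A$, shows that $\la_1^{-n}L(f^n(P))$ converges to a limit, strictly positive whenever $L(P)$ has a nonzero pairing with the Perron left-eigenvector -- which holds for every $P$ in a sufficiently small polydisk. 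Combined with~\eqref{e103} this proves (i) on $\Omega_v\cap\A^2(K_v)$ with a positive limit.

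To globalize, define $U^+_v\subseteq\A^2(K)$ as the set of $P$ whose forward orbit eventually enters $\Omega_v$. Openness is immediate from continuity of the iterates and of $\Omega_v$; nonemptiness follows from density of $\A^2(K)$ in $X(K_v)$, see~\S\ref{S113}. For any $P\in U^+_v$ the previous analysis applies after a finite shift, yielding convergence and positivity of $\la_1^{-n}\tau_v\circ f^n(P)$, and hence (i) and (ii).

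The main obstacle is (iii). For $P\notin U^+_v$ the orbit $(f^n(P))$ never enters $\Omega_v$; since $f$ is regular on $Z^+$ and contracts $Z^+$ to $\xi_+$, the preimage $f^{-1}(\Omega_v)$ contains an open $v$-adic neighborhood of $Z^+$ in $X(K_v)$, so after finitely many iterates the orbit is uniformly separated from $Z^+$ and accumulates only on components of $Z^-$ (or stays bounded in $\A^2(K_v)$). Iterating~\eqref{e108} and invoking Theorem~\ref{T101}(1), which controls the pole order along each $Z^-$-component by $\la_2+\e$ per iterate, yields $\tau_v\circ f^n(P)=O((\la_2+\e)^n)$; letting $\e\downarrow 0$ gives (iii). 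The delicate technical step -- carried out in~\cite[\S7]{dyncomp} in the Archimedean setting -- is to propagate the one-step inequality~\eqref{e108} uniformly along orbits that may oscillate between different components of $Z^-$ while making bounded excursions toward (but never into) $\Omega_v$.
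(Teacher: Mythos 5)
Your proposal follows the same overall structure as the paper (localize at $\xi_+$ using Theorem~\ref{T101}, get convergence and positivity from a Perron--Frobenius argument, control the complement via Theorem~\ref{T101}(1)), and parts (i) and (ii) are sketched correctly. The gap is exactly in part (iii), which is the heart of the proposition. You correctly identify that what is needed is a one-step inequality valid on the complement of the basin of $\xi_+$, but you do not establish it: you describe it as a ``delicate technical step'' involving ``orbits that may oscillate between different components of $Z^-$'' and defer to the Archimedean discussion in~\cite[\S7]{dyncomp}. That misdiagnoses the difficulty and leaves the non-Archimedean case --- the point of this paper --- unproved.

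The paper's actual argument is simpler than you suggest and makes no reference to how orbits wander: it is a compactness argument on $X(L_v)$, valid for any $v$ because $X(L_v)$ is compact (this is the content of~\S\ref{S113}). One defines an open set $\tV_v^+\subseteq X(L_v)$ with $Z^+(L_v)\subseteq\tV_v^+$ and $\tV_v^+\cap\A^2(L_v)=V_v^+$ (take $\tV_v^+=(f^{n_0})^{-1}(\tilde\Omega_{v,\d})\setminus I$, using regularity of $f^{n_0}$ along $Z^+$). The complement $X(L_v)\setminus\tV_v^+$ is compact, and near each of its points $\eta$ --- necessarily in $Z^-\setminus Z^+$ if not in $\A^2(L_v)$ --- the local formulas~\eqref{e108},~\eqref{e109},~\eqref{e103} give
$\tau_v\circ f^{n_0}\le(\la_2+\e)^{n_0}\tau_v+O(1)$ on a neighborhood of $\eta$ intersected with $\A^2(L_v)$. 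Compactness makes the $O(1)$ uniform on all of $\A^2(L_v)\setminus V_v^+$. Since $V_v^+$ is forward invariant, a point $P\notin U_v^+$ has its entire forward orbit in $\A^2(L_v)\setminus V_v^+$, so one simply iterates the uniform one-step inequality; there is no propagation problem. You need to supply this compactness step.

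Two smaller remarks. First, your reduction to $n_0=1$ by replacing $f$ with $f^{n_0}$ is not harmless as stated: (i) for $f^{n_0}$ only gives convergence of $\la_1^{-jn_0}\tau_v\circ f^{jn_0}$, not of the full sequence, and one would need the functional equation to interpolate. The paper avoids this by proving convergence for all $n$ directly using the local normal form of $f$ itself (not $f^{n_0}$) at $\xi_+$; $n_0$ enters only in defining $V_v^+=f^{-n_0}(\Omega_{v,\d})$ and in the one-step estimate for (iii). Second, positivity of $\htau_v$ requires choosing $\d$ small enough that the main term $-\log|z_i|_v>-\log\d_i$ dominates the bounded correction series; ``sufficiently small polydisk'' is the right idea but should be made explicit.
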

To prove this, pick $0<\e<\la_1-\la_2$. 
Given this $\e$, let $X$, $\xi_+$ and $n_0$ be as in Theorem~\ref{T101}
and choose a number field 
$L=L(\e)\supseteq K$ such that $f$ and $X$ are defined over $L$
and the closed point $\xi_+$ is $L$-rational.
Extend $v$ as an absolute value on $L$.
\begin{Lemma}\label{L101}
  There exists an open subset 
  $V^+_v\subseteq\A^2(L_v)$ such that 
  $f(V^+_v)\subseteq V^+_v$ and
  \begin{itemize}
  \item[(i)]
    the limit $\htau_v:=\lim_{n\to\infty}\la_1^{-n}\tau_v\circ f^n$ 
    exists pointwise on $V^+_v$ and $0<\htau_v<\infty$;
  \item[(ii)]
    $\A^2(L)\cap V^+_v\ne\emptyset$;
  \item[(iii)]
   $\tau_v\circ f^{n_0}\le(\la_2+\e)^{n_0}\tau_v+O(1)$ on $\A^2(L_v)\setminus V^+_v$,
   with $n_0$ as in Theorem~\ref{T101}.
  \end{itemize}
\end{Lemma}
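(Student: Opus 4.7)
The plan is to construct $V_v^+$ as the attracting basin of the super-attracting fixed point $\xi_+$ inside $\A^2(L_v)$, to analyze the local height via the normal form from Theorem~\ref{T101}(3) on a fundamental neighborhood of $\xi_+$, and to exploit the decomposition $X \setminus \A^2 = Z^+ \cup Z^-$ to control the complement. Throughout, I would treat case (a) of Theorem~\ref{T101}(3) in detail; case (b) is essentially the one-dimensional analogue and goes through with the obvious simplifications.

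For (i), choose local coordinates $z_1, z_2 \in \cO_{X,\xi_+}$ defined over $L$ with $E_i = \{z_i = 0\}$. The assertion $f^* E_i = a_{i1} E_1 + a_{i2} E_2$ locally at $\xi_+$, combined with the fact that $E_1 \cup E_2$ accounts for the entire boundary locally, gives $f^* z_i = u_i \, z_1^{a_{i1}} z_2^{a_{i2}}$ with $u_i$ a unit at $\xi_+$. Setting $s_i := -\log|z_i|_v$ and $\vec s := (s_1, s_2)$, this reads $\vec s \circ f = A \vec s + \vec c$, with $A = (a_{ij})$ and $\vec c$ bounded on any fixed $U_C := \{s_1, s_2 > C\}$. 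For $C \gg 1$, $U_C$ is forward invariant since $A$ expands the positive cone (spectral radius $\la_1 > 1$). By Perron--Frobenius applied to the nonnegative integer matrix $A$ (with $\la_1 \notin \Q$ in case (a) ensuring a spectral gap $|\mu| < \la_1$), there is a positive right Perron eigenvector $v^+$ and a positive left eigenvector $w^+$, and iterating gives
\begin{equation*}
\la_1^{-n} \vec s(f^n P) \to \tilde c(P)\, v^+, \qquad \tilde c(P) = w^+ \cdot \vec s(P) + \sum_{k \ge 0} \la_1^{-k-1} w^+ \cdot \vec c(f^k P),
\end{equation*}
the series converging uniformly on $U_C$ by boundedness of $\vec c$. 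Combined with~\eqref{e103} on $U_C$ (where $b_1, b_2 > 0$ in case (a) since $E_1, E_2 \subseteq Z^+$), this produces the pointwise limit $\htau_v(P) = \tilde c(P)(b_1 v_1^+ + b_2 v_2^+) > 0$ on $U_C$. I then extend $\htau_v$ and $V_v^+$ to the full basin by setting $V_v^+ := \bigcup_{n \ge 0} f^{-n}(U_C)$ and $\htau_v(P) := \la_1^{-n}\htau_v(f^n P)$ for $n$ sufficiently large; forward invariance is automatic. For (ii), $U_C$ is a nonempty open subset of $X(L_v)$, so $U_C \cap \A^2(L) \ne \emptyset$ by the density statement in \S\ref{S113}.

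For (iii), observe that $X(L_v) \setminus V_v^+$ is compact (closed inside compact $X(L_v)$), so it suffices to verify the inequality locally near each of its points and patch by compactness. Here the geometric content of Theorem~\ref{T101}(2)--(3) is essential: since $f^{n_0}$ is regular on $Z^+$ and collapses $Z^+$ to $\xi_+$, for any point $\eta \in Z^+$ the preimage of a small neighborhood of $\xi_+$ under $f^{n_0}$ contains an open neighborhood of $\eta$, which therefore lies in $V_v^+$ after one more iterate. Consequently the complement $X(L_v) \setminus V_v^+$ is disjoint from a neighborhood of $Z^+$, and its intersection with $X \setminus \A^2$ lies in $Z^-$. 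At a point $\xi \in Z^-$, pick local coordinates $w_1, w_2$ adapted to the boundary and apply~\eqref{e103} and~\eqref{e108} to $f^{n_0}$, noting that every boundary component that appears is a component of $Z^-$, so~\eqref{e109} gives
\begin{equation*}
\tau_v \circ f^{n_0} \le \sum_{j} |\ord_{F_j}(f^{n_0 *}\ell)|\,(-\log|w_j|_v) + O(1) \le (\la_2+\e)^{n_0} \tau_v + O(1)
\end{equation*}
on a neighborhood of $\xi$. On the remaining compact piece lying in $\A^2$, $\tau_v$ and $\tau_v \circ f^{n_0}$ are both bounded so the inequality holds trivially.

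The main obstacle I anticipate is step (iii): making sure that a genuine neighborhood of every point of $Z^+$ (including any intersection points $Z^+ \cap Z^-$ and including $\xi_+$ itself) is absorbed into $V_v^+$, so that the bound only needs to be verified in pure $Z^-$--neighborhoods. This requires carefully choosing the neighborhood sizes so that the forward iteration by $f^{n_0}$ pushes near-$Z^+$ points strictly inside $U_C$ (and hence into the basin), rather than just close to $\xi_+$, which rests on the regularity of $f^{n_0}$ at $Z^+$ and attractivity at $\xi_+$. A secondary technical point is verifying positivity of $\htau_v$ on all of $V_v^+$ and the uniform $O(1)$ estimates in the Perron--Frobenius iteration of Step (i), both of which should follow from the spectral gap $|\mu|<\la_1$ in case (a) and the scalar structure in case (b).
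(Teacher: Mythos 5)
Your overall approach mirrors the paper's (analyze the linearized dynamics near the superattracting fixed point $\xi_+$ via Perron--Frobenius, then use the $Z^-$-estimate~\eqref{e109} on the complement), and your formula for $\htau_v$ via the left Perron eigenvector $w^+$ is a nice explicit version of what the paper computes. However, there is a genuine gap in how you set up the fundamental neighborhood. You take $U_C = \{s_1, s_2 > C\}$ and claim forward invariance ``since $A$ expands the positive cone.'' This is not true in general. In case~(a), irreducibility forces $a_{12}, a_{21} \ge 1$, but a row sum of $A$ may equal $1$ (e.g.\ $A = \left(\begin{smallmatrix}0 & 1\\ 1 & 1\end{smallmatrix}\right)$, $\la_1 = (1+\sqrt 5)/2$). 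Then for $\vec s = (s_1, C+\epsilon)$ with $\epsilon$ small, $(A\vec s + \vec c)_1 = C + \epsilon + c_1$ falls below $C$ if $c_1 < -\epsilon$, and $c_1 = -\log|\phi_1(\xi_+)|_v + o(1)$ has no definite sign. The same problem appears in case~(b), where $s_2 \circ f = s_1 - \log|\phi_2|_v$ has no useful lower bound unless $\d_1 \ll \d_2$. Without forward invariance of the fundamental domain you cannot guarantee that $\vec c(f^k P)$ stays bounded along orbits, which is exactly what your convergence of $\tilde c(P)$ uses, so the gap propagates into part~(i) as well. The fix is the one the paper uses: shape the polydisk by the positive Perron eigenvector $\zeta$, setting $\d_i = \kappa^{\zeta_i}$, equivalently $\Omega = \{s_i > \zeta_i T\}$; then $(A\vec s)_i > \la_1 \zeta_i T$ gives a margin $\zeta_i T(\la_1 - 1)$ that dominates $\|\vec c\|_\infty$ for $T$ large. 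In case~(b) one analogously takes $\d_1 \ll \d_2$, using $\la_1 \ge 2$; this case is not quite the scalar simplification you suggest, since $\phi_2(\xi_+)$ may vanish.

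On part~(iii): your compactness strategy is the right one, but as written ``$X(L_v) \setminus V_v^+$ is disjoint from a neighborhood of $Z^+$'' is false, because $V_v^+ \subseteq \A^2(L_v)$ and hence its complement in $X(L_v)$ contains all of $Z^+(L_v)$. You flag this as the main obstacle but do not resolve it. What is needed (and what the paper does) is to introduce an open set $\tV^+_v \subseteq X(L_v)$ with $\tV^+_v \cap \A^2(L_v) = V^+_v$ and $Z^+(L_v) \subseteq \tV^+_v$; concretely $\tV^+_v := (f^{n_0})^{-1}(\tilde\Omega_{v,\d}) \setminus I$, using that $f^{n_0}$ is regular on $Z^+$, collapses it to $\xi_+$, and $\xi_+ \in \tilde\Omega_{v,\d}$. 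Then $X(L_v)\setminus\tV^+_v$ is compact, meets the boundary only in $Z^- \setminus Z^+$, and the local estimate applies. Also note that, at a point $\eta \in Z^- \setminus Z^+$, one of the two coordinate divisors $F_i$ need not lie in $Z^-$ (it may be a curve meeting $\A^2$); then $\ord_{F_i}(\ell) = 0 \ge \ord_{F_i}(f^{n_0*}\ell)$ and the estimate holds trivially for that index, a case you should separate out rather than asserting every boundary component that appears is in $Z^-$.
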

This result is an analogue of~\cite[Lemma~7.2]{dyncomp}, which in 
turn is modeled on results for the complex H\'enon map. 
See also~\cite[Lemma~2.1]{Ingram11}.

Granted Lemma~\ref{L101} we conclude the proof of Proposition~\ref{P102}
as follows. Set
\begin{equation}\label{e114}
  U^+_v:=\A^2(K)\cap\bigcup_{n\ge0}f^{-n}(V^+_v).
\end{equation}
By Lemma~\ref{L101}~(i), 
the limit defining $\htau_v$ exists on the open set $U^+_v$
and $0<\htau_v<\infty$ there. 
Now suppose $P\in\A^2(K)\setminus U^+_v$. 
Since $f^{jn_0}(P)\not\in V^+_v$ for all $j\ge0$,
Lemma~\ref{L101}~(iii) yields
$\limsup_{j\to\infty}(\tau_v(f^{jn_0}(P)))^{1/jn_0}\le\la_2+\e$. 
Using the bound $\tau_v\circ f\le(\deg f)\tau_v+O(1)$
on $\A^2(K)$ 
(see~\cite[(3.6)~p.92]{SilvermanBook})
we obtain 
\begin{equation*}
  \limsup_{n\to\infty}\tau_v(f^{n}(P))^{1/n}\le\la_2+\e<\la_1.
\end{equation*}
Thus $\htau_v$ is well defined and finite everywhere on $\A^2(K)$
and the set $U^+_v$ is characterized, independently of $L$, as 
$U^+_v=\{\htau_v>0\}$. Letting $\e\to0$ we obtain~(iii).

It only remains to prove that $U^+_v$ is nonempty, assuming that 
$K_0$ is well chosen and $K\supseteq K_0$. 
For this, it suffices to consider the case $K=K_0$. 
Fix $\e_0$ with $0<\e_0<\la_1-\la_2$,
apply Theorem~\ref{T101} with $\e=\e_0$ and 
pick $K_0$ so that all the relevant data in that theorem 
(as well as $f$) are defined over $K_0$.
Given $v\in M_{K_0}$, we now apply Lemma~\ref{L101} with $L=K=K_0$.
We obtain $U^+_v\supseteq\A^2(K_0)\cap V^+_v\ne\emptyset$ as desired.
\begin{proof}[Proof of Lemma~\ref{L101}]
  We first analyze the situation locally at infinity 
  near the fixed point $\xi_+$. 
  Pick local coordinates $(z_1,z_2)$ at $\xi_+$ such that 
  $X\setminus\A^2\subseteq E_1\cup E_2$, locally at $\xi$,
  where $E_i=\{z_i=0\}$.
  Given $0<\d_i\ll1$ define $\Omega_{v,\d}$ as in~\eqref{e102}.
  We claim that for suitable choices of $\d_i$ we have 
  $f(\Omega_{v,\d})\subseteq\Omega_{v,\d}$ and that 
  $\la_1^{-n}\tau_v\circ f^n$ converges pointwise on $\Omega_{v,\d}$
  to a strictly positive function $\htau_v$. 
  To see this, we consider the two cases~(a) and~(b) in
  Theorem~\ref{T101}~(3) separately.
  
  In case~(b) we have $E_1\cap\A^2=\emptyset\ne E_2\cap\A^2$ and
  \begin{equation}\label{e104}
    f^*z_1=z_1^{\la_1}\phi_1
    \quad\text{and}\quad
    f^*z_2=z_1\phi_2,
  \end{equation}
  where $\phi_i\in\cO_{X,\xi_+}$ and $\phi_1(\xi_+)\ne0$.
  For $0<\d_1\ll\d_2\ll1$ it follows, using $\la_1\ge2$, that 
  $f(\Omega_{v,\d})\subseteq\Omega_{v,\d}$.
  We claim that as $n\to\infty$, the functions $s_n:=\la_1^{-n}\log|f^{n*}z_1|_v$ 
  converge pointwise on $\Omega_{v,\d}$ to a function $s_\infty<0$.
  Indeed,~\eqref{e104} yields $s_{n+1}=s_n+\la_1^{-(n+1)}\log|\phi_1\circ f^n|_v$
  and hence 
  \begin{equation*}
    s_n\to s_\infty:=\log|z_1|_v+\sum_0^\infty\la_1^{-(j+1)}\log|\phi_1\circ f^j|_v
    \quad\text{as $n\to\infty$}.
  \end{equation*}
  Here the series is uniformly bounded
  on $\Omega_{v,\d}$, so $s_\infty<0$. 
  Clearly $s_\infty\circ f=\la_1s_\infty$.
  Using~\eqref{e103} we can rewrite this in terms of the local height.
  Note that $b_1>0=b_2$ since $E_1\cap\A^2=\emptyset\ne E_2\cap\A^2$.
  Thus
  $\tau_v=-b_1\log|z_1|_v+O(1)$  on $\Omega_{v,\d}$. 
  It is then clear that 
  $\la_1^{-n}\tau_v\circ f^n$ converges pointwise on 
  $\Omega_{v,\d}$ to the function $\htau_v:=-b_1 s_\infty>0$.
  
  In case~(a) we instead have $E_i\cap\A^2=\emptyset$ for $i=1,2$ and
  \begin{equation}\label{e105}
    f^*z_i=z_1^{a_{i1}}z_2^{a_{i2}}\phi_i
  \end{equation}
  for $i=1,2$, where $a_{ij}\in\N$, 
  $\phi_i\in\cO_{X,\xi_+}$ and $\phi_i(\xi_+)\ne0$. 
  The matrix $(a_{ij})$ has irrational spectral radius $\la_1$ and hence
  admits an eigenvector $\zeta=(\zeta_1,\zeta_2)$ with $\zeta_i>0$.
  Set $\d_i=\kappa^{\zeta_i}$ with $0<\kappa\ll1$.
  Since $\la_1>1$ it follows that 
  $f(\Omega_{v,\d})\subseteq\Omega_{v,\d}$.
  We have
  \begin{equation}\label{e106}
    f^{n*}z_i=z_1^{a_{i1}^{(n)}}z_2^{a_{i2}^{(n)}}
    \prod_{l=0}^{n-1}(\phi_1^{a_{i1}^{(l)}}\phi_2^{a_{i2}^{(l)}})\circ f^{n-1-l},
  \end{equation}
  where, for $0\le l\le n$, $a_{ij}^{(l)}$ are the entries in the matrix $A^l$.
  By the Perron-Frobenius Theorem, $\la_1^{-n}A^n$ converges
  as $n\to\infty$ to a matrix with strictly positive coefficients.
  Using~\eqref{e106} we see that for $i=1,2$,
  the sequence $(s_{i,n})_{n=1}^\infty$ of functions on $\Omega_{v,\d}$,
  defined by $s_{i,n}=\la_1^{-n}\log|f^{n*}z_i|_v$, converges pointwise
  to a strictly negative function $s_{i,\infty}$.
  Clearly $s_{i,\infty}\circ f=\la_1 s_{i,\infty}$.
  Using~\eqref{e103}, this implies that 
  $\la_1^{-n}\tau_v\circ f^n$ converges pointwise on 
  $\Omega_{v,\d}$ to the function $\htau_v:=-\sum_{i=1}^2b_i s_{i,\infty}$.
  Since $b_i>0$ we again have $\htau_v>0$.
  
  Set $V^+_v:=f^{-n_0}(\Omega_{v,\d})$. Clearly $V^+_v$ is open in $\A^2(L_v)$
  and $f(V^+_v)\subseteq V_v^+$.
  Further, $\la_1^{-n}\tau_v\circ f^n$ converges pointwise on 
  $V^+_v$ to a function $\htau_v>0$.
  By~\S\ref{S113}, we have 
  $V^+_v\cap\A^2(L)\supseteq\Omega_{v,\d}\cap\A^2(L)\ne\emptyset$.
  
 \smallskip
  It remains to prove the estimate in~(iii). 
  Let $\tV^+_v\subseteq X(L_v)$ be an open set such that 
  $\A^2(L_v)\cap\tV^+_v=V^+_v$ and $Z^+(L_v)\subseteq\tV^+_v$.
  Such a set can be constructed as follows.
  Following~\eqref{e102} set
  \begin{equation*}
    \tilde{\Omega}_{v,\d}:=\{P\in X(L_v)\mid |z_i(P)|_v<\d_i, i=1,2\},
  \end{equation*}
  so that $\tilde{\Omega}_{v,\d}\cap\A^2(L_v)=\Omega_{v,\d}$.
  Recall that the extension of $f^{n_0}$ as a rational selfmap of $X$
  is regular at every point of $Z^+$ and that $f^{n_0}(Z^+)=\{\xi^+\}$.
  We can therefore set
  \begin{equation*}
    \tV^+_v:=(f^{n_0})^{-1}(\tilde{\Omega}_{v,\d})\setminus I,
  \end{equation*}
  where $I$ is the indeterminacy set of $f^{n_0}:X\dashrightarrow X$.

  In order to prove~(iii) it suffices, by compactness, to prove the estimate
  \begin{equation}\label{e113} 
    \tau_v\circ f^{n_0}\le(\la_2+\e)^{n_0}\tau_v+O(1)
  \end{equation}
  in a neighborhood of any point $\eta\in X(L_v)\setminus\tV^+_v$.
  This is clear if $\eta\in\A^2(L_v)$, so we may assume
  $\eta\not\in\A^2(L_v)$.
  Then $\eta\in Z^-(L_v)\setminus Z^+(L_v)$.
  Pick local coordinates $(w_1,w_2)$ at $\eta$, defined over $L_v$,
  such that $X(L_v)\setminus\A^2(L_v)\subseteq F_1(L_v)\cup F_2(L_v)$ 
  locally at $\eta$, 
  where $F_i=\{w_i=0\}$.
  Let $\tG_\eta$ be a small neighborhood of $\eta\in X(L_v)$.
  On $\tG_\eta\cap\A^2(L_v)$ we then have 
  \begin{align*}
    \tau_v\circ f^{n_0}
    &\le\sum_{i=1}^2\ord_{F_i}(f^{n_0*}\ell)\log|w_i|_v+O(1)\notag\\
    &\le(\la_2+\e)^{n_0}\sum_{i=1}^2\ord_{F_i}(\ell)\log|w_i|_v+O(1)\notag\\
    &\le(\la_2+\e)^{n_0}\tau_v+O(1),
  \end{align*}
  where $\ell\in\cL$ is a general affine function.
  Here the first and third inequality follow from~\eqref{e108}
  and~\eqref{e103}, respectively. The second inequality
  results from~\eqref{e109} when $F_i\subseteq Z^-$ and is 
  trivial otherwise, since $\ord_{F_i}(f^*\ell)\le 0=\ord_{F_i}(\ell)$
  in this case.

  Thus~\eqref{e113} holds, which completes the proof.
\end{proof}
%
%
%
%
\subsection{Growth of heights}\label{S108}
Let $K_0$ be as in Proposition~\ref{P102}.
Pick any point $P\in\A^2(\Qbar)$ and let $K\supseteq K_0$
be a number field such that $P$ is $K$-rational.
We claim that there exists a finite subset 
$M_K'=M_K'(P)\subseteq M_K$ such that
\begin{equation}\label{e111}
  \tau_v(f^n(P))=0
  \quad\text{for all $v\in M_K\setminus M'_K$ and all $n\ge 0$}. 
\end{equation}
To see this, note that $f^*x_i\in K[x_1,x_2]$, $i=1,2$, so for 
all but finitely many $v\in M_K$ we will have 
$f^*x_i\in\fo_v[x_i]$, where $\fo_v=\{a\in K\mid |a|_v\le 1\}$.
For these $v$ we then also have 
$f^{n*}x_i\in\fo_v[x_i]$ for all $n\ge1$.
Further, for all but finitely many $v$ we have 
$x_i(P)\in\fo_v$. Combining these statements and the 
definition of $\tau_v$, we obtain~\eqref{e111}.

It is now easy to conclude. We have 
\begin{equation*}
  h(f^n(P))
  =\sum_{v\in M_K}\frac{[K_v:\Q_v]}{[K:\Q]}\tau_v(f^n(P))
  =\sum_{v\in M'_K}\frac{[K_v:\Q_v]}{[K:\Q]}\tau_v(f^n(P)),
\end{equation*}
so since $M'_K$ is finite, all assertions in the Main Theorem
follow from Proposition~\ref{P102}, except for the claim
about polynomial automorphisms.
%
%
%
%
\subsection{Polynomial automorphisms}\label{S110}
Now assume that $f$ is a polynomial automorphism with $\la_1>1$.
We must prove that if $\hh(P)=0$, then $P$ is periodic.

\medskip
First suppose $f$ is \emph{regular}~\cite{Sibony}
in the sense that the indeterminacy loci of the 
birational maps $f,f^{-1}:\P^2\dashrightarrow\P^2$
are disjoint.

From Propositions~2.2.2 and~2.3.2 in~\cite{Sibony}
we deduce the following facts:
$\deg f=\deg f^{-1}=\la_1$;
the indeterminacy locus of $f$ (resp.\ $f^{-1}$) 
is a single point $\xi_-$ (resp.\ $\xi_+$)
on the line at infinity $L_\infty=\P^2\setminus\A^2$
with $\xi_+\ne\xi_-$; 
$f(L_\infty\setminus\{\xi_-\})=\xi_+$ and
$f^{-1}(L_\infty\setminus\{\xi_+\})=\xi_-$.
  
Let $X$ be the minimal admissible compactification of 
$\A^2$ for which the induced birational map 
$f:X\dashrightarrow\P^2$ is a morphism (\ie without
indeterminacy points). Concretely, $X$ is
obtained from $\P^2$ by successively blowing up the
indeterminacy locus of $f$. Since $f$ is birational, 
$X\setminus\A^2$ has a unique irreducible component
$E_-$ that is mapped onto $L_\infty$ by $f$. 
All other irreducible components are mapped to $\xi_+$.

Set $Z^-:=E_-$ and $Z^+:=X\setminus(\A^2\cup E_-)$.
We claim that a strong version of Theorem~\ref{T101} holds,
with $n_0=1$.
Indeed, note that statement~(2) holds. 
The same is true of statement~(3): we are in case~(b), 
with $E=L_\infty$.
Further, we claim that 
\begin{equation}\label{e112}
  \ord_{E_-}(f^*\ell)=-1
  \quad\text{and}\quad
  \ord_{E_-}(\ell)=-\la_1,
\end{equation}
for a general affine function $\ell$.
Since $\la_1>\la_2=1$, this is stronger than~(1).

To prove~\eqref{e112}, recall that $f(E_-)=L_\infty$. Since
$f$ is an automorphism, this implies 
$\ord_{E_-}(f^*\ell)=\ord_{L_\infty}(\ell)=-1$.
Similarly, 
$\ord_{E_-}(\ell)=\ord_{L_\infty}((f^{-1})^*\ell)=-\deg f^{-1}=-\la_1$.

Fix a number field $K_0$ over which $f$ and $X$ are defined.
Suppose $K\supseteq K_0$ and consider an absolute value $v\in M_K$.
Using~\eqref{e112} we prove Lemma~\ref{L101} with $L=K$ and 
the following estimate, which is stronger than the one in~(iii):
\begin{equation*}
  \tau_v\circ f\le\la_1^{-1}\tau_v+O(1)
\end{equation*}
on $\A^2(K_v)\setminus V^+_v$. Since $\la_1>1$, it follows that 
the sequence $(\tau_v\circ f^n)_{n=0}^\infty$ must be pointwise bounded on 
$\A^2(K)\setminus U^+_v$, where $U^+_v$ is defined as in~\eqref{e114}.

Now fix $P\in\A^2(\Qbar)$ with $\hh(P)=0$.
Pick $K\supseteq K_0$ such that $P\in\A^2(K)$.
From the preceding paragraph and the arguments in~\S\ref{S108} 
it follows that the sequence $(h(f^n(P)))_{n=0}^\infty$ 
is bounded, which by the Northcott Theorem implies that 
$P$ is preperiodic, hence periodic.

\medskip
Finally we treat the general case when $f$ is not necessarily regular.
It follows from~\cite{FriedlandMilnor}
that there exists a polynomial automorphism 
$g:\A^2\to\A^2$ such that $\tf:=g^{-1}fg$ is regular.
Set $D=(\deg g)(\deg g^{-1})$. 
Since $f^n=g\tf^ng^{-1}$ we have $D^{-1}\deg\tf^n\le\deg f^n\le
D\deg\tf^n$ and hence $\la_1(\tf)=\la_1(f)$.

As for the growth of heights, we already know that the limits 
\begin{equation*}
  \hh_f:=\lim_{n\to\infty}\la_1^{-n}h\circ f^n
  \quad\text{and}\quad
  \hh_\tf:=\lim_{n\to\infty}\la_1^{-n}h\circ\tf^n
\end{equation*}
exist, pointwise on $\A^2(\Qbar)$.
Since $\deg g^{\pm 1}\le D$, we have by~\cite[Theorem~3.11]{SilvermanBook} that
\begin{equation*}
  D^{-1}(h\circ\tf^n)+O(1)\le h\circ f^n\circ g\le D(h\circ\tf^n)+O(1)
\end{equation*}
and hence $D^{-1}\hh_\tf\le\hh_f\circ g\le D\hh_\tf$.
In particular, if $\hh_f(P)=0$, then $\hh_\tf(g^{-1}(P))=0$.
By what precedes, $g^{-1}(P)$ is then periodic under $\tf$, so that
$P$ is periodic under $f$.
This concludes the proof of the Main Theorem.
%
%
%
%
\subsection{A non-invertible example}\label{S111}
Let us consider the map
\begin{equation*}
  f(x,y)=(y^2(xy+1),x(xy^3+1)).
\end{equation*}
The topological degree is $\la_2=4$. For example, the point $(0,0)$
has three preimages $(0,0)$,  $(1,-1)$ and $(-1,1)$, with
multiplicities
2, 1 and 1, respectively.
It is easy to find a recursion relation for $(\deg f^n)_{n=0}^\infty$ 
and show that the first dynamical degree is $\la_1=2+\sqrt{7}$.
In particular, $\la_1>\la_2$, so $f$ is of small topological degree.

Note that $f^2(x,0)=(x^2,0)$ and $f^2(0,y)=(0,y^2)$. 
Since $\la_1>\sqrt{2}$, the coordinate axes are contained in the locus 
$\{\hh=0\}$, which therefore contains points 
for which $h\circ f^n$ grows exponentially.
%
%
%
%
\subsection*{Acknowledgment}
We thank 
Dennis Eriksson, 
Shu Kawaguchi,
Joey Lee,
Mircea Musta\c{t}\u{a} 
and
Joe Silverman
for fruitful discussions.
The first author was supported by the NSF and the second author
by the Swedish Research Council.
%
%
%
%
%
%


\end{document}